\documentclass[12pt]{amsart}
\newcommand{\bb}{\mathbf{b}}
\newcommand{\p}{\mathbf{P}}
\newcommand{\y}{\mathbf{y}}
\newcommand{\nn}{\mathbb{N}}
\newcommand{\rr}{\mathbb{R}}
\newcommand{\zz}{\mathbb{Z}}
\newcommand{\e}{\hbox{\rm e}}

\newtheorem{theorem}{Theorem}
\newtheorem{example}[theorem]{Example}
\newtheorem{corollary}[theorem]{Corollary}
\newtheorem{definition}[theorem]{Definition}
\newtheorem{lemma}[theorem]{Lemma}
\newtheorem{rem}[theorem]{Remark}
\numberwithin{equation}{section}

\title[Certificates for integer programming]
{Certificates and relaxations for integer programming 
and the semi-group membership problem}
\author{J. B. Lasserre}
\address{LAAS-CNRS and Institute of Mathematics,
LAAS 7 Avenue du Colonel Roche, 31077 Toulouse cedex 4, France}
\email{lasserre@laas.fr}
\author{E. S. Zeron}
\address{Depto. Matem\'aticas, CIVESTAV-IPN,
Apdo.~Postal 14740, Mexico D.F. 07000, M\'exico.}
\email{eszeron@math.cinvestav.mx}

\begin{document}
\begin{abstract}
We consider integer programming and the semi-group membership 
problem. We develop and extend the approach started in 
\cite{lasserrefarkas,lasserreduality} so as to provide the following 
\textit{theorem of the alternative}: the system $b=Ax$ has no nonnegative 
integral solution $x\in\nn^n$ if and only if $p(b)<0$ for some given 
polynomial $p$. The coefficients of $p$ form a vector which lies in some 
convex cone $\Omega$, and so we characterize $\Omega$. We also provide a 
hierarchy of linear programming relaxations, where the continuous case 
$Ax=b$ describes the first relaxation in the hierarchy for $x\in\rr^n$ 
and $x\geq0$.
\end{abstract}
\maketitle

\section{Introduction}
This paper is concerned with certificates for integer programming (IP) as well as with
the semi-group membership problem. That is, given a finitely generated 
abelian group $G$ (e.g. $\zz^m$), a semi-group $G_a\subset G$ generated 
by a finite family $(a_k)_{k=1}^n\subset G$, and an element $b\in G$, we 
provide a certificate of $b\in G_a$ or $b\not\in G_a$. We build upon and 
extend previous work of \cite{lasserrefarkas,lasserreduality,book}, notably on a discrete 
Farkas lemma for IP. Among other things, we provide a 
hierarchy of linear programming relaxations (LP-relaxations) for integer programming.
The first relaxation in the hierarchy is just the usual 
LP relaxation, which then appears as a {\it first-order} (or linear) 
approximation to the discrete case, whereas usually IP is viewed as an arithmetic refinement of LP. We also provide a theorem of the 
alternative (or duality theorem) in the form of a polynomial certificate 
associated with the IP problem, and we compare with the certificate for LP obtained
by the standard Farkas lemma.

A central idea in nonconvex optimization is to replace a non convex 
(hence hard) problem with a suitable easier convex problem in some lifted
space, but at the price of increasing the dimension. For instance in the 
lift-and-project approach for polynomial optimization (e.g. 0-1 problems) 
one replaces $x\in\rr^n$ with the vector $\y=(x^\alpha)$ of all moments 
and solves some hierarchy of appropriate linear or semidefinite 
relaxations. The interested reader is referred for more details to e.g.  
Sherali and Adams \cite{sherali1,sherali2}, Lov\'asz and Schrijver 
\cite{lovasz}, and Lasserre \cite{lasserresiopt1,lasserresiopt2}; see also 
Laurent \cite{laurent} for a comparison. Of course, any IP problem can
also be modeled via polynomial equations. For example, if the entry 
$x_i$ is bounded by some integer $M$, then one may include the polynomial constraint 
$\prod_{k=0}^M(x_i-k)$ which forces $x_j$ to be an integer, and so the 
above methodology applies. However, since the degree in the constraint is 
$M$ (as opposed to $2$ in the Boolean case), the size of the first 
linear or semidefinite relaxation in the hierarchy (in e.g. 
\cite{lasserresiopt2}) is already very large because it 
includes moments up to order $M$.

Let $A\in\nn^{m\times{n}}$ be a fixed matrix. In the approach 
developed in \cite{lasserrefarkas,lasserreduality} the integral 
solution $x\in\nn^n$ to $Ax=b$ is also lifted to some $\y\in\rr^p$ 
with $p\leq{n}\prod_j(1{+}b_j)$. But this time the lifting process has a different 
meaning. Indeed there is some  very simple matrix $E\in\rr^{n\times p}$
such that $x:=E\y\in\rr^n$ is now a point in the integer 
hull of feasible solutions.
Furthermore, the vector $\y$ is a point of some polytope and 
several interpretations can be deduced from the lifting process. For 
example it was already shown in \cite{lasserrefarkas} that the lifting 
process can be used to prove that $b=Ax$ for some nonnegative integral 
vector $x\in\nn^n$ if and only if the polynomial $z^b{-}1$ has a 
nonnegative representation in the binomial ideal generated by the 
binomials $(z^{A_k}{-}1)$. But this interpretation is only one 
among many other interpretations as shown in the present paper.
 
Interestingly, one can also use the lifting process to provide a 
hierarchy of LP relaxations for the IP problem, so that 
the continuous case $\{Ax=b:x\in\rr^n,x\geq0\}$ appears as a first-order 
(or linear) approximation of the discrete case with $x\in\nn^n$. This 
hierarchy also provides us with a {\it theorem of the alternative} (or 
duality theorem) which uses a non linear polynomial, a discrete 
analogue of the celebrated Farkas Lemma in linear algebra and convex 
optimization. Recall that the Farkas Lemma provides a membership 
certificate for the convex cone $\Theta:=\{Ax:x\in\rr^n,x\geq0\}$, in the form
$b\not\in\Theta$ if and only if $\omega'b<0$ for some $\omega$ with 
$\omega'A\geq0$. In other words $b\not\in\Theta$ if and only if the {\it 
linear} polynomial $z\mapsto{p_\omega}(z):=\omega'z$ is negative when 
evaluated at $z=b$.

\vspace{1em}\subsection*{Contribution}
Every finitely generated abelian group $G$ can 
be identified with a subset of $\zz^m$ and so:

\vspace{1em}$\bullet$ 
We firstly show that the semi-group membership problem reduces to the 
existence of a nonnegative integral vector $x\in\nn^n$, solution of 
some related linear system $Ax=b$ for some nonnegative integral matrix 
$A\in\nn^{m\times n}$. Shevshenko \cite[p.~11]{Shevchenko} developed 
a similar result in the framework of additive semigroups with unity, 
contained in a finite generated abelian group.

\vspace{1em}$\bullet$ 
Set $p:=\sum_{k=1}^n\prod_{j=1}^m(1{+}b_j{-}A_{j;k})$. We next show that 
existence of a nonnegative integral solution $x\in\nn^n$ to the linear 
system $Ax=b$ reduces to the existence of a nonnegative real solution 
$\y\in\rr^p$ for a system of linear equations of the form~:
\begin{eqnarray}
\label{a1}
b_j&=&\y'A^{(e_j)},\quad\forall\;1\leq{j}\leq{m};\\
\label{a2}
b_ib_j&=&\y'A^{(e_i+e_j)},\quad\forall\;1\leq{i}\leq{j}\leq{m};\\
\nonumber\dots&=&\cdots\\
\label{am}
b_1^{z_1}\cdots{}b_m^{z_m}&=&\y'A^{(z)},\quad
\begin{pmatrix}\forall\;0\leq{z_j}\leq{b_j},\\
z_1{+}\cdots{+}z_m=\delta;\end{pmatrix}\\
\nonumber\dots&=&\cdots\\
\label{ab}
b_1^{b_1}\cdots{}b_m^{b_m}&=&\y'A^{(b)};
\end{eqnarray}
for some appropriate nonnegative integer vectors 
$A^{(e_j)},A^{(e_i+e_j)},A^{(z)}\in\nn^p$ with $z\in\nn^m$ and $z\leq{b}$. 
The parameter $\delta\geq1$ in (\ref{am}) is the degree of the monomial 
$b\mapsto{}b_1^{z_1}\cdots{}b_m^{z_m}$ in (\ref{am}). Therefore a 
certificate of $b\neq{Ax}$ for every $x\in\nn^n$ is obtained as soon as 
any subsystem of (\ref{a1})-(\ref{ab}) has no solution $\y\in\rr^p$; that 
is, one does not need to consider the entire system (\ref{a1})-(\ref{ab}).

We can index the entries of $A^{(e_j)}=(A^{(e_j)}[k,u])\in\nn^p$ 
in (\ref{a1}) in such a way that $A^{(e_j)}[k,u]=A_{j;k}$ for all $u$, 
$1\leq{j}\leq{m}$, and $1\leq{k}\leq{n}$. If we also index the entries of 
$\y=(y[k,u])\in\rr^p$ in the same way, the new vector 
$\widehat{x}=(\widehat{x}_k)\in\rr^n$ with $\widehat{x}_k:=\sum_uy[k,u]$ 
satisfies $A\widehat{x}=b$ and belongs to the integer hull of 
$\{x\in\rr^n:Ax=b,x\geq0\}$, whenever $\y$ is a solution to 
(\ref{a1})-(\ref{ab}). In this approach, LP (or the continuous case) appears as a particular "first 
order" (or "linear") approximation of IP (the discrete case). Indeed,
if one considers (\ref{a1}) alone (i.e. ignoring
(\ref{a2})-(\ref{ab}) which have nonlinear 
right-hand-sides terms $b^z$ for $\|z\|>1$) then from any nonnegative solution 
$\y$ of (\ref{a1}) one obtains a real nonnegative
solution $\widehat{x}\in\rr^n$ of $A\widehat{x}=b$, and conversely.

To construct a natural hierarchy of LP-relaxations 
for the IP feasibility problem $Ax=b$, $x\in\nn^n$, just consider 
an increasing number of equations among the system (\ref{a1})-(\ref{ab}), 
so that the last (and largest size) LP-relaxation is the whole system 
(\ref{a1})-(\ref{ab}) that describes the integer hull of the set 
$\{x\in\nn^n:Ax=b\}$. Thus, if on the one hand the discrete case is an 
arithmetic refinement of the continuous one, on the other hand the discrete case can be 
approximated via LP-relaxations of increasing sizes, and these 
relaxations are different from the lift-and-project ones described in e.g. 
(\cite{laurent}). To the best of our knowledge such a hierarchy has not 
been investigated before. Even if it is not clear at the moment whether 
this hierarchy of linear relaxations is useful from a computational 
viewpoint, it provides new insights for integer programming.

On the other hand it was already proved in 
\cite{lasserrefarkas,lasserreduality} that existence of a nonnegative 
integral solution $x\in\nn^n$ to the linear system $Ax=b$, reduces to 
the existence of a nonnegative real solution $\y\in\rr^p$ for a system 
of the form
\begin{equation}\label{laas}
(-1,0,...,0,1)'\,=\,\Theta\,\y,
\end{equation}
where $\Theta$ is some appropriated {\em network} matrix (hence totally unimodular). We show 
that the system (\ref{a1})-(\ref{ab}) can be deduced from 
(\ref{laas}) by multiplying it from the left times a square invertible 
matrix $\Delta\in\rr^{s\times{s}}$. In particular $\Delta$ is a Kronecker 
product of Vandermonde matrices. Hence any real vector $\y\geq0$ is 
solution of (\ref{laas}) if and only the same $\y$ is solution of 
(\ref{a1})-(\ref{ab}).

\vspace{1em} $\bullet$ 
We provide a polyhedral convex cone $\Omega\subset\rr^s$ associated with
(\ref{a1})-(\ref{ab}) for some $s\in\nn$, such that a direct application 
of Farkas lemma to the continuous system (\ref{a1})-(\ref{ab}) implies 
that either $b=Ax$ for some integral vector $x\in\nn^n$ or there exists 
$\xi=(\xi_w)\in\Omega$ such that $p_\xi(b)<0$ for a polynomial 
$p_\xi\in\rr[u_1,...,u_m]$ of the form
\begin{equation}\label{different}
u\mapsto p_\xi(u)\,=\,\sum_{w\in\nn^m,\,w\neq0}\xi_w\,u^w.
\end{equation}
Thus (\ref{different}) provides an explicit nonlinear {\it polynomial} 
certificate for IP, in contrast with the {\it linear} 
polynomial certificate fro LP obtained from  the classical Farkas lemma.

In the discrete Farkas lemma presented in 
\cite{lasserrefarkas,lasserreduality} the author defines a
polyhedral cone $\Omega_2\subset\rr^s$ associated 
with (\ref{laas}), and proves that either $b=Ax$ for 
some nonnegative integral vector $x\in\nn^n$ or there exists 
$\pi\in\Omega_2$ such that $(-1,0,...,0,1)\cdot\pi<0$. It turns out 
that the vector $\xi=(\xi_w)$ in (\ref{different}) indeed satisfies 
$\pi=\Delta'\xi$ for a square invertible matrix $\Delta$ defined
as the Kronecker product of Vandermonde matrices. 

\vspace{1em} $\bullet$ 
Inspired by the relationships between the systems (\ref{a1})-(\ref{ab}) 
and (\ref{laas}), we finally show that existence of a nonnegative 
integral solution $x\in\nn^n$ for the linear system $Ax=b$ reduces 
to the existence of a nonnegative real solution $\y\in\rr^p$ for a 
system of linear equations of the form:
$$M\,(-1,0,...,0,1)'\,=\,M\,\Theta\,\y,$$
where $M\in\rr^{s\times{s}}$ is any square invertible matrix. We can 
apply the standard Farkas lemma to any one of the linear systems presented 
above, and deduce a specific (Farkas) certificate for each 
choice the invertible matrix $M$. Each certificate can be seen 
as a \textit{theorem of the alternative} of the form: the system $b=Ax$ has no 
nonnegative integral solution $x\in\nn^n$ if and only if $f(b)<0$ for 
some given function $f$. For the particular choice
$M:=\Delta'$, existence of a nonnegative integral solution 
$x\in\nn^n$ for the linear system $Ax=b$ reduces to existence of a 
nonnegative real solution $\y\in\rr^p$ for a system of linear equations 
of the form: 
$$u^b-1\,=\,\y'D^{[u]},\quad\forall\;u\in\nn^m
\quad\hbox{with}\quad{u}\leq b,$$ 
for some appropriate nonnegative integer vectors $D^{[u]}\in\nn^{p}$. 
In this case, the function $f$ involved in the Farkas certificate 
$f(b)<0$ has the exponential-like expansion
\begin{equation}\label{expo}
u\mapsto\,f(u)\,:=\,\sum_{z\in\nn^m}\xi_z\cdot(z^u-1).
\end{equation}

Both certificates (\ref{different}) and (\ref{expo}) are different from the 
certificate 
obtained from the superadditive dual approach 
of Gomory and Johnson \cite{gomoryjohnson}, Johnson \cite{johnson}, and 
Wolsey \cite{wolsey1}. In particular, in \cite{wolsey1} the
linear system associated with such a certificate has dimension $s^2\times s$,
which is larger than the size $ns\times{s}$ of the system 
(\ref{laas}) of this paper.

\vspace{1em}\subsection*{The method} 
A theorem of the alternative is obtained in three steps:

 \textbf{\{1\}} One first shows that a linear system $Ax=b$ (with $A\in\nn^{m\times n}$ 
and $b\in\nn^m$) has a nonnegative integral solution $x\in\nn^n$ if and 
only if the function $f:\zz^m{\to}\rr$ given by $z\mapsto{f}(z):=b^z$ 
can be written as a linear combination of functions
$z\mapsto{f_u}(z):=(u{+}A_k)^z{-}u^z$ weighted by some nonnegative 
coefficients (for the indexes $u\in\nn^m$ with $u\leq b$).

 \textbf{\{2\}} 
One then shows that computing the nonnegative coefficients in the above 
linear decomposition of $f$  is equivalent to finding a nonnegative real 
solution to a finite system of linear equations whose dimension is bounded 
from above by  $ns\times{s}$ with $s:=\prod_j(1{+}b_j)$.

 \textbf{\{3\}} 
Finally one applies the standard continuous Farkas lemma to the linear 
system described in  \textbf{\{2\}} and obtains the certificate 
(\ref{different}).

This approach is similar in flavor but different from the one in 
\cite{lasserrefarkas,lasserreduality}.
However, they are strictly equivalent and can be 
deduced one from each other under some appropriate linear transformation 
whose associated matrix $\Delta\in\nn^{s\times s}$ has a simple explicit 
form. 

\section{Notation and definitions}

The notation $\rr$, $\zz$ and $\nn=\{0,1,2,...\}$ stand for the usual 
sets of real, integer and natural numbers, respectively. Moreover, the 
set of positive integer numbers is denoted by $\nn^*=\nn\setminus\{0\}$. 
Given any vector $b\in\zz^m$ and matrix $A\in\zz^{m\times{n}}$, the 
$[k]$-entry of $b$ (resp. $[j;k]$-element of $A$) is denoted by either
$b_k$ or $b[k]$ (resp. $A_{j;k}$ or $A[j;k]$). The notation $A'$ stands 
for the transpose of any matrix (or vector) $A\in\rr^{m\times{n}}$; and the 
$k$th column of the matrix $A$ is denoted by $A_k:=(A_{1;k},...,A_{m;k})'$.

\vspace{1em}\subsection*{The semi-group membership problem}
A classical result in group theory states that every abelian group $G$ 
with $m\in\nn^*$ generators is isomorphic to some Cartesian product
$$G\,\cong\,[\zz/p_1\zz]\times[\zz/p_2\zz]\times
\cdots\times[\zz/p_q\zz]\times[\zz^{m-q}],$$
for some set of numbers $\{p_j\}\subset \nn^*$ and $q\leq{m}$; see 
e.g. \cite{DuFo}. One may even suppose that every $p_j$ divides $p_k$ 
whenever $j<k$. Therefore, if one introduces the \textit{extended} 
$m$-dimensional vector:
\begin{equation}
P\,:=\,(p_1,p_2,\ldots,p_q,\infty,\ldots,\infty)',
\end{equation}
the abelian group $G$ is isomorphic to the group $\widetilde{G}$ 
of vectors $x\in\zz^m$ such that $0\leq{x_j}<p_j$ 
for every $1\leq{j}\leq{q}$; notice that $q\leq{m}$. The group sum 
$x\oplus y$ of two elements $x$ and $y$ in $\widetilde{G}\subset\zz^m$ 
is then defined by 
\begin{equation}
x\oplus{y}:=(x+y)\bmod{P}\quad\mbox{in}\quad\widetilde{G},
\end{equation}
where the sum $x+y$ is the standard addition on $\zz^m$ and the 
modulus of the sum $(x+y)\bmod{P}$ is calculated entry by entry, 
so that for every index $1\leq{k}\leq{m}$, 
\begin{equation}
\big[(x+y)\bmod{P}\big]_k\,=\,\left\{\begin{array}{ccc}
(x_k+y_k)\bmod{P_k}&\mbox{if}&P_k<\infty,\\
(x_k+y_k)&\mbox{if}&P_k=\infty.
\end{array}\right.\end{equation}

Hence from now on we suppose that $G=\widetilde{G}\subset\zz^m$. Next 
let $\{a_k\}\subset{G}$ be a collection of $n$ elements of $G$. Each 
element $a_k$ can be seen as a vector of $\zz^m$, for $1\leq{k}\leq{n}$,  
so that the  semi-group  generated by $\{a_k\}$ is the same as the set
\begin{eqnarray*}
G_a&:=&\{Ax\bmod{P}\,|\,x\in\nn^m\},\quad\hbox{with}\\
A&:=&[a_1|a_2|\cdots|a_n]\in\zz^{m\times{n}}.
\end{eqnarray*}

Thus, given $b\in G$, the semi-group membership problem of deciding
whether $b\in{G}_a$  is equivalent to deciding whether the system of
linear equations $b=Ax\bmod{P}$ has a solution $x\in\nn^n$. This in 
turn is equivalent to deciding whether the following system of linear 
equations 
\begin{equation}
b\,=\,Ax+\begin{pmatrix}-B&B\\ 0&0\end{pmatrix}
\cdot\begin{pmatrix}u\\ w\end{pmatrix},
\end{equation}
has a solution $(x,u,w)$ in $\nn^n\times\nn^q\times\nn^q$,
where
\begin{equation}
B\,:=\,\begin{pmatrix}
p_1&0&\cdots&0\\  
0&p_2&\cdots&0\\ 
\vdots&\vdots&\cdot&\vdots\\  
0&0&\cdots&p_q\\
\end{pmatrix}\,\in\,\nn^{q\times{q}}.
\end{equation}

Hence, with no loss of generality, the membership problem is equivalent 
to deciding whether some related  system of linear equations $\mathcal{A}x=b$ 
(with $\mathcal{A}\in\zz^{m\times\ell}$ and $b\in\zz^m$) has a 
solution $x\in\nn^\ell$, which is the problem we will consider 
in the sequel.

\section{Existence of integer solutions to $\{Ax=b,\,x\geq0\}$}

Let $\rr[z]=\rr[z_1,\ldots,z_m]$ be the ring of real polynomials 
in the variables $z=(z_1,\ldots,z_m)$ of $\rr^m$. With 
$\mathcal{A}\in\zz^{m\times\ell}$ we analyze existence of a 
nonnegative integer solution $x\in\nn^\ell$ to the system of 
linear equations $\mathcal{A}x=b$.

According to the computational complexity terminology in \cite{schrijver}, 
let $\varphi$ be the facet complexity of the rational convex polyhedron 
$\p:=\{x\in\rr^\ell:x\geq0,\mathcal{A}x=b\}$. That is, each inequality 
in $\mathcal{A}x\leq b$, $\mathcal{A}x\geq b$, and $x\geq0$ has size at 
most $\varphi$. Corollary~17.1b in \cite[p.~239]{schrijver} states that
$\p$ contains an integral vector of size at most $6\ell^3\varphi$, if 
$\p$ contains an integral vector $x\in\nn^\ell$. Hence existence of 
an integral vector in $\p$ is equivalent to analyze existence of an 
integral vector of the convex (compact) polytope 
$$\widehat{\p}\,:=\,\{x\in\rr^\ell:x\geq0,\,\mathcal{A}
x=b,\,\sum_{i=1}^\ell x_i\leq M_{\mathcal{A},b}\},$$ 
where $M_{A,b}$ is obtained explicitly from the facet complexity of $\p$.

\vspace{1em}\subsection{Reduction to $A\in\nn^{m\times n}$}
In view of the above one may restrict our analysis to the existence 
of a nonnegative integral solution $x\in\nn^n$ for a system of linear 
equations $Ax=b$ associated with a rational convex {\it polytope} 
$\p:=\{x\in\rr^n:Ax=b,x\geq0\}$, where $A\in\zz^{m\times{n}}$ and 
$b\in\zz^m$. But this in turn implies that one may restrict our 
analysis to existence of a nonnegative integral solution 
$y\in\nn^{n+1}$ to a system of linear equations $A^{\star}y=b^\star$ 
where $A^\star\in\nn^{[m+1]\times[n+1]}$ and $b^\star\in\nn^{m+1}$.

Indeed, if  $A\in\zz^{m\times{n}}$ and $A\not\in\nn^{m\times{n}}$,
let $\alpha\in\nn^n$ be such that
\begin{equation}
\label{3-10}
\widehat{A}_{j;k}:=A_{j;k}+\alpha_k\geq0;\quad
\forall\;1\leq{j}\leq{m},\;1\leq{k}\leq{n}.
\end{equation}
Since $\p$ is a (compact) polytope,
\begin{equation}\label{3-11}
\rho\,:=\,\max_{x\in\nn^n,\,Ax=b}\bigg\{
\sum_{k=1}^n\alpha_kx_k\bigg\}\,<\,\infty.
\end{equation}
In particular $\rho\in\nn$. Now define $\widehat{b}\in\nn^m$ by 
\begin{equation}\label{3-12}
\widehat{b}:=b+\rho\,\e_m\geq0\quad
\hbox{with}\quad\e_m:=(1,...,1)'\in\nn^m.
\end{equation}

Let $\widehat{A}\in\nn^{m\times n}$ be defined as in (\ref{3-10}). 
The solutions $x\in\nn^n$ to the original system $Ax=b$ are in 
one-to-one correspondence with the solutions $(x,u)\in\nn^n\times\nn$ 
to the extended system
\begin{equation}\label{3-20}
\begin{array}{rl}
\widehat{A}x+\e_mu&=\widehat{b},\\
\alpha'x+u&=\rho.
\end{array}\end{equation}
Indeed, if $Ax=b$ with $x\in\nn^n$, then
$$Ax+\e_m\bigg[\sum_{k=1}^n\alpha_kx_k\bigg]
-\e_m(\alpha'x)+\rho\,\e_m\,=\,b+\rho\,\e_m.$$
The following identity follows from the definitions for 
$\widehat{A}\in\nn^{m\times n}$ and $\widehat{b}\in\nn^m$ 
given in (\ref{3-10})-(\ref{3-12}) :
$$\widehat{A}x+\e_mu=\widehat{b}\quad
\hbox{with}\quad{u}:=\rho-\alpha'x\in\zz.$$

Notice that $u\geq0$ because $\rho\geq\alpha'x$ according 
to (\ref{3-11}), so that $(x,u)$ is the integer nonnegative 
solution of (\ref{3-20}) that we are looking for. Conversely let 
$(x,u)\in\zz^{n+1}$ be a solution to (\ref{3-20}). The definitions 
for $\widehat{A}\in\nn^{m\times n}$ and $\widehat{b}\in\nn^m$ 
given in (\ref{3-10})-(\ref{3-12}) imply that 
$$Ax+\e_m\bigg[\sum_{k=1}^n\alpha_kx_k\bigg]+\e_mu=b+\rho\,\e_m,$$
so that $Ax=b$ with $x\in\nn^n$ because $u=\rho{-}\alpha'x$. Hence 
the existence any solution $x\in\nn^n$ to $Ax=b$ is completely 
equivalent to the existence any solution $(x,u)\in\nn^{n+1}$ to 
$$\begin{pmatrix}\;\widehat{b}\;\\ \rho\end{pmatrix}=A^\star\cdot
\begin{pmatrix}x\\ u\end{pmatrix}\quad\hbox{with}\quad{A}^\star
:=\begin{pmatrix}\widehat{A}&\e_m\\ \alpha'&1\end{pmatrix},$$
and the new matrix $A^\star\in\nn^{[m+1]\times[n+1]}$ 
has only nonnegative integer entries.

\vspace{1em}\subsection{The main result}
Given vectors $b\in\nn^m$ and $z\in\rr^m$, the notation $z^b$ stands 
for the monomial $z_1^{b_1}z_2^{b_2}\cdots{}z_m^{b_m}\in\rr[z]$. We 
also need to define a pair of matrices $\Delta$ and $\Theta$ that we 
will use in the sequel.

\begin{definition}\label{main-def}
Let $A\in\nn^{m\times{n}}$ and $\beta\in\nn^m$ be such that 
$A_k\leq\beta$ for each index $1\leq{k}\leq{n}$. Set the integers
\begin{equation}\label{main-1}
s:=\prod_{j=1}^m(1{+}\beta_j)\quad\hbox{and}\quad
p:=\sum_{k=1}^n\prod_{j=1}^m\big(1{+\beta_j-}A_{j;k}\big)\leq{ns}.
\end{equation}

Let $\Delta\in\nn^{s\times{s}}$ be a square Vandermonde matrix 
whose rows and columns are indexed with the nonnegative vectors 
$z,w\in\nn^m$ (according to e.g. the lexicographic ordering) 
so that the $[z;w]$-entry is given by
\begin{equation}\label{matrix-2}
\Delta[z;w]=w^z=w_1^{z_1}w_2^{z_2}\cdots
w_m^{z_m}\quad\forall\;z,w\leq\beta;
\end{equation}
and where we use the convention that $0^0=1$. 

Let $\Theta\in\zz^{s\times{p}}$ be a network matrix whose rows 
and columns are respectively indexed with the nonnegative vectors 
$w\in\nn^m$ and $(k,u)\in\nn{\times}\nn^m$, so that the 
$(w;(k,u))$-entry is given by
\begin{equation}\label{matrix-3}
\Theta[w;(k,u)]\,=\,\left\{\begin{array}{cl} 
-1&\hbox{if~}w=u\leq\beta{-}A_k,\\ 
1&\hbox{if~}w=u{+}A_k\leq\beta,\\
0&\hbox{otherwise},\end{array}\right.
\end{equation}
for all indexes $w\leq\beta$, $u\leq\beta{-}A_k$, and $1\leq{k}\leq{n}$.
\end{definition}

The following result is straightforward.

\begin{lemma}\label{Kroneker}
The square Vandermonde matrix $\Delta\in\rr^{s\times{s}}$ 
in (\ref{matrix-2}) is invertible. The matrix 
$\Theta\in\rr^{s\times{p}}$ in (\ref{matrix-3}) is 
a network matrix, and so it is totally unimodular.
\end{lemma}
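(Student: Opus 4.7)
The plan is to handle the two assertions independently, since they concern different structural properties of the two matrices.

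For the invertibility of $\Delta$, I would first observe that, under the lexicographic ordering on multi-indices $z,w\in\nn^m$ with $z,w\leq\beta$, the matrix $\Delta$ factorizes as a Kronecker product
\[
\Delta \,=\, V_1\otimes V_2\otimes\cdots\otimes V_m,
\]
where $V_j\in\rr^{(1+\beta_j)\times(1+\beta_j)}$ is the one-dimensional Vandermonde matrix with entries $V_j[z_j;w_j]:=w_j^{z_j}$ for $0\leq z_j,w_j\leq\beta_j$ (and $0^0=1$). This identification just reflects the multiplicativity $w^z=\prod_j w_j^{z_j}$. Each factor $V_j$ is a standard Vandermonde matrix evaluated at the distinct nodes $0,1,\ldots,\beta_j$, so
\[
\det V_j \;=\; \prod_{0\leq i<k\leq \beta_j}(k-i)\;\neq\;0.
\]
Since the Kronecker product of invertible matrices is invertible (indeed $\det(\Delta)=\prod_j (\det V_j)^{s/(1+\beta_j)}\neq 0$), it follows that $\Delta$ is invertible.

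For the second assertion, I would argue directly from the definition~(\ref{matrix-3}). Fix a column index $(k,u)$ with $1\leq k\leq n$ and $u\leq\beta-A_k$. The entry $\Theta[w;(k,u)]$ is nonzero only when either $w=u$ (yielding $-1$) or $w=u+A_k$ (yielding $+1$); these two row indices are distinct because $A_k\neq 0$ can be assumed (if $A_k=0$ the column is zero, which does not affect total unimodularity or the network interpretation). Hence every column of $\Theta$ has at most one entry equal to $+1$ and at most one entry equal to $-1$, with all other entries equal to $0$. This is exactly the vertex–arc incidence matrix of the directed graph whose vertex set is $\{w\in\nn^m:w\leq\beta\}$ and whose arc set contains, for each admissible $(k,u)$, an arc from $u$ to $u+A_k$. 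Consequently $\Theta$ is a network (incidence) matrix, and total unimodularity is the classical theorem on incidence matrices of directed graphs (see e.g.\ Schrijver).

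There is no real obstacle here: the only thing to be careful about is fixing a single ordering of the multi-indices $z,w\leq\beta$ that simultaneously exhibits $\Delta$ as a Kronecker product and indexes the rows of $\Theta$ consistently. Both facts are essentially bookkeeping together with well-known results on Vandermonde determinants and on incidence matrices, which explains the author's remark that the lemma is ``straightforward.''
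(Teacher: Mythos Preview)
Your proof is correct and follows essentially the same approach as the paper: the paper also factors $\Delta$ as a Kronecker product of one-dimensional Vandermonde matrices and observes that $\Theta$ has $\{0,\pm1\}$ entries with a $+1$ and a $-1$ in each column, invoking Schrijver for total unimodularity. You add a bit more detail (the explicit Vandermonde determinant, the graph interpretation, and the harmless $A_k=0$ edge case), but the substance is identical.
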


\begin{proof}
It is easy to see that $\Delta$ is invertible because it is the 
Kroneker product $D^{[1]}\otimes{D}^{[2]}\otimes\cdots\otimes{D^{[m]}}$ 
of $m$ square Vandermonde matrices
\begin{equation}
D^{[j]}\,=\,\begin{pmatrix}1&1^0&2^0&\cdots&(\beta_j)^0\\  
0&1^1&2^1&\cdots&(\beta_j)^1\\ \vdots&\vdots&\vdots&\cdot&\vdots\\ 
0&1^{\beta_j}&2^{\beta_j}&\cdots&(\beta_j)^{\beta_j}\end{pmatrix};
\end{equation}
see e.g. \cite{hornjohnson,zhang}. And each $D^{[j]}$ is 
obviously invertible. By inspection it turns out that $\Theta$ is a 
\textit{network matrix}, that is, it is a matrix with only $\{0,\pm1\}$ 
entries and with exactly two nonzero entries $1$ and $-1$ in each 
column. Therefore $\Theta$ is totally unimodular; see e.g. Schrijver 
\cite[p.~274]{schrijver}. 
\end{proof}

\begin{definition}
Given a finite set $U\subset\nn^m$ and a collection of 
coefficients $q[u]\in\rr$, we say that the polynomial
$$z\to{Q}(z)\,=\,\sum_{u\in{U}}q[u]\,z^u\,=\,
\sum_{u\in{U}}q[u]z_1^{u_1}z_2^{u_2}\cdots{z_m}^{u_m}$$ 
has multivariate degree bounded by a vector $\beta\in\nn^m$ 
if and only if $u\leq\beta$ for every $u\in{U}$.
\end{definition}

The following technical result was essentially shown
in \cite{lasserrefarkas,lasserreduality} but we include 
the proof for the sake of completeness.

\begin{theorem}\label{Lasserre}
Let $\beta,b\in\nn^m$ and $A\in\nn^{m\times{n}}$ be such that~:
\begin{equation}\label{eqn-p1}
\beta\geq{b}\quad\hbox{and}\quad\beta\geq
A_k\quad\hbox{for all}\quad1\leq{k}\leq{n}.
\end{equation}
The following three statements {\rm\bf (a)}, 
{\rm\bf (b)} and {\rm\bf (c)} are all equivalent~:
\begin{description}
\item[(a)] The linear system $Ax=b$ has a solution $x\in\nn^n$.

\vspace{1em}\item[(b)] The polynomial 
$z\mapsto{z}^b{-}1:=z_1^{b_1}z_2^{b_2}\cdots{z}_m^{b_m}{-}1$ 
can be written as follows~:
\begin{equation}\label{eqn-p2}
z^b-1\,=\,\sum_{k=1}^nQ_k(z)(z^{A_k}-1)
\end{equation}
for some real polynomials $Q_k\in\rr[z_1,z_2,...,z_m]$ with 
nonnegative coefficients and multivariate degree bounded 
by the vector $\beta{-}A_k$, for all $1\leq{k}\leq{n}$.

\vspace{1em}\item[(c)] There is a real nonnegative solution 
$\y\in\rr^p$ for the system of linear equations
\begin{equation}\label{eqn-p3}
\bb\,=\,\Theta\,\y,
\end{equation}
where $\Theta\in\zz^{s\times{p}}$ is given as in (\ref{matrix-3}) 
of Definition~\ref{main-def} and the new vector $\bb\in\zz^s$ has 
entries indexed with the nonnegative vector $w\in\nn^m$, so that 
the $[w]$-entry is given by
\begin{equation}\label{eqn+p3}
\bb[w]\,:=\,\left\{\begin{array}{cl}
-1&\hbox{if~}w=0,\\ 1&\hbox{if~}w=b,\\
0&\hbox{otherwise.}\end{array}\right.
\quad\forall\;0\leq{w}\leq\beta.
\end{equation}
\end{description}
\end{theorem}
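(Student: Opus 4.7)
My plan is to prove the three statements equivalent by closing the cycle (a)$\Rightarrow$(b)$\Rightarrow$(c)$\Rightarrow$(a). The implication (b)$\Leftrightarrow$(c) is essentially bookkeeping (matching coefficients of the same polynomial identity), the implication (a)$\Rightarrow$(b) is a classical telescoping identity, and (c)$\Rightarrow$(a) is the only place where a genuinely nontrivial tool is required.

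For (a)$\Rightarrow$(b), I would fix $x\in\nn^n$ with $Ax=b$, set the partial sums $S_k:=\sum_{i\le k}A_ix_i$ (so $S_0=0$ and $S_n=b$), and telescope
$$z^b-1\;=\;\sum_{k=1}^n\bigl(z^{S_k}-z^{S_{k-1}}\bigr)\;=\;\sum_{k=1}^n z^{S_{k-1}}\bigl(z^{A_kx_k}-1\bigr).$$
The geometric-series factorization $z^{A_kx_k}-1=(z^{A_k}-1)\sum_{j=0}^{x_k-1}z^{jA_k}$ then yields the candidate $Q_k(z):=z^{S_{k-1}}\sum_{j=0}^{x_k-1}z^{jA_k}$. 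These polynomials have $\{0,1\}$-coefficients, hence nonnegative, and every monomial occurring in $Q_k$ has multi-degree at most $S_{k-1}+(x_k-1)A_k=S_k-A_k\le b-A_k\le\beta-A_k$ by (\ref{eqn-p1}).

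For (b)$\Leftrightarrow$(c), I would set up an explicit bijection. Writing $Q_k(z)=\sum_{u\le\beta-A_k}q_k[u]\,z^u$ and identifying $\y[k,u]:=q_k[u]$, the right-hand side of (\ref{eqn-p2}) expands as $\sum_{k,u}\y[k,u]\bigl(z^{u+A_k}-z^u\bigr)$, a polynomial in which every monomial has multi-degree $\le\beta$. For each $0\le w\le\beta$, reading off the coefficient of $z^w$ on the right produces precisely
$$\sum_{k\,:\,A_k\le w}\y[k,w-A_k]\;-\;\sum_{k\,:\,w+A_k\le\beta}\y[k,w]\;=\;(\Theta\y)[w]$$
by the definition (\ref{matrix-3}) of $\Theta$, while the coefficient on the left is $\bb[w]$ by (\ref{eqn+p3}). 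So the polynomial identity (\ref{eqn-p2}) is the exact avatar of the linear system $\Theta\y=\bb$, and the equivalence follows by reading the correspondence either way.

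The main obstacle is (c)$\Rightarrow$(a), since one must extract an \emph{integer} solution to $Ax=b$ from a merely real nonnegative $\y$. Here I would exploit Lemma~\ref{Kroneker}: because $\Theta$ is totally unimodular and $\bb\in\zz^s$, the Hoffman--Kruskal theorem gives that the polyhedron $\{\y\in\rr^p:\y\ge0,\ \Theta\y=\bb\}$ is integral; being contained in $\rr^p_+$ it is pointed, so the nonemptiness hypothesis of (c) produces a vertex $\y\in\nn^p$. Then $x_k:=\sum_u\y[k,u]$ lies in $\nn$, and the column of $\Theta$ indexed by $(k,u)$ has only a $-1$ at row $u$ and a $+1$ at row $u+A_k$, so $\sum_w w\,\Theta[w;(k,u)]=(u+A_k)-u=A_k$. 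Weighting the rows of $\Theta\y=\bb$ by $w\in\nn^m$ and summing therefore gives
$$Ax\;=\;\sum_{k,u}A_k\,\y[k,u]\;=\;\sum_w w\,\bb[w]\;=\;0\cdot(-1)+b\cdot 1\;=\;b,$$
closing the cycle. The only ingredient beyond polynomial bookkeeping is this totally-unimodular lifting of a real solution to an integer one.
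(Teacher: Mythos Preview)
Your proof is correct and follows essentially the same route as the paper: the same telescoping/geometric-series construction for (a)$\Rightarrow$(b), the same coefficient-matching for (b)$\Leftrightarrow$(c), and the same use of total unimodularity of $\Theta$ to upgrade a real solution to an integer one in (c)$\Rightarrow$(a). The only cosmetic difference is in how you extract $Ax=b$ from the integer $\y$: the paper differentiates the polynomial identity with respect to each $z_j$ and evaluates at $z=(1,\ldots,1)$, whereas you take the $w$-weighted sum of the rows of $\Theta\y=\bb$; since $\partial_{z_j}z^w\big|_{z=\e_m}=w_j$, these two computations are literally the same.
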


Notice that $\Theta\in\zz^{s\times{p}}$ in (\ref{matrix-3}) and 
(\ref{eqn-p3}) is determined only by the entries of $\beta\in\nn^m$ 
and  $A\in\nn^{m\times{n}}$, so that $\Theta$ is independent of 
$b\in\zz^m$. 
Observe that if $b\geq{A_k}$
for every $1\leq{k}\leq{n}$, then one may take $\beta:=b$.

\begin{proof}
\textbf{(a)} $\Rightarrow$ \textbf{(b)}. 
Suppose that $b=A x$ for some $x\in\nn^n$. Consider 
the following polynomials (where we use the convention 
$\sum_{q=0}^{-1}(\cdot)=0$)
\begin{eqnarray*}
z\mapsto Q_1(z)&=&\sum_{q=0}^{x_1-1}z^{qA_1},\\
z\mapsto Q_2(z)&=&z^{x_1A_1}\sum_{q=0}^{x_2-1}z^{qA_2},\\
z\mapsto Q_3(z)&=&z^{x_1A_1}z^{x_2A_2}\sum_{q=0}^{x_3-1}z^{qA_3},\\
&\cdots&\\ z\mapsto Q_n(z)&=&\bigg[\prod_{k=1}^{n-1}
z^{x_kA_k}\bigg]\;\sum_{q=0}^{x_n-1}z^{qA_n}.\\
\end{eqnarray*}
 
It is easy to see that the polynomials $Q_k$ satisfy equation 
(\ref{eqn-p2}). Moreover each $Q_k$ has nonnegative coefficients, 
and the multivariate degree of $Q_k$ is bounded by the vector 
$\beta{-}A_k$ for $1\leq{k}\leq{n}$ because $b\leq\beta$.

\vspace{1em}\textbf{(b)} $\Leftrightarrow$ \textbf{(c)}. 
Existence of polynomials $Q_k\in\rr[z_1,z_2,...,z_m]$ with 
nonnegative coefficients and multivariate degree bounded by the 
vector $\beta{-}A_k$ (for all $1\leq{k}\leq{n}$) is equivalent 
to the existence of real coefficients $y[k,u]\geq0$ such that
$$Q_k(z)=\!\sum_{u\in\nn^m\!,\,u\leq\beta-A_k}\!
y[k,u]z^u\quad\forall\quad1\leq{k}\leq{n}.$$
Then rewrite equation (\ref{eqn-p2}) as follows:
\begin{equation}\label{eqn-p4}
z^b-1=\sum_{k=1}^n\;
\sum_{\genfrac..{0pt}1{u\in\nn^m,}{u\leq\beta-A_k}}
y[k,u]\big(z^{u+A_k}-z^u\big),
\end{equation}
The vector of coefficients $\y=(y[k,u])\geq0$ satisfies 
(\ref{eqn-p4}) if and only if it satisfies the following 
system of linear equations
\begin{equation}\label{eqn-p5}
\bb\,=\,\Theta\cdot\begin{pmatrix}y[1,(0,...,0,0)]\\ 
y[1,(0,...,0,1)]\\ \vdots\\ y[n,(\beta-A_n)]\end{pmatrix},
\end{equation}
where $\bb\in\zz^s$ is given in (\ref{eqn+p3}) and the matrix 
$\Theta\in\zz^{s\times p}$ is given in (\ref{matrix-3}), that 
is, such that
$$\Theta[w;(k,u)]\,=\,\left\{\begin{array}{cl}
-1&\hbox{if~}w=u\leq{b-}A_k,\\ 
1&\hbox{if~}w=u{+}A_k\leq{b},\\
0&\hbox{otherwise.}\end{array}\right.$$

Each row in (\ref{eqn-p5}) is indexed with the monomial 
$z^w\in\nn^m$, for $w\in\nn^m$ and $w\leq{b}$ (according 
to e.g. the lexicographic ordering).

\vspace{1em}\textbf{(c)} $\Rightarrow$ \textbf{(a)}. 
Let $\y=(y[k,u])\geq0$ be a real vector such that (\ref{eqn-p3}) 
and (\ref{eqn-p5}) hold. The matrix $\Theta$ is totally unimodular
according to Lemma~\ref{Kroneker}, and so there exists a nonnegative 
\textit{integer} solution $\widehat{\y}=(\widehat{y}[k,u])\geq0$ to 
(\ref{eqn-p3}) and (\ref{eqn-p5}) because the left-hand-side of 
(\ref{eqn-p5}) is an integral vector. Therefore (\ref{eqn-p4}) 
also holds with the new vector $\widehat{\y}$, that is:
$$z^b-1=\sum_{k=1}^n\;
\sum_{\genfrac..{0pt}1{u\in\nn^m,}{u\leq\beta-A_k}}
\widehat{y}[k,u]\big(z^{u+A_k}-z^u\big).$$
Given a fixed index $1\leq{j}\leq{m}$, differentiate both sides 
of the equation above with respect to the variable $z_j$ and 
evaluate at the point $z=(1,1,...,1)$, so as to obtain:
\begin{equation}\label{eqn-p6}
b_j=\sum_{k=1}^nA_{j;k}x_k,\quad\hbox{with}\quad
x_k:=\sum_{\genfrac..{0pt}1{u\in\nn^m,}{u\leq\beta-A_k}}
\widehat{y}[k,u].
\end{equation}
The nonnegative integer vector $x=(x_1,...,x_n)'\in\nn^m$ 
satisfies the desired result $Ax=b$.
\end{proof}

\begin{rem}\label{rem33}
One linear constraint in (\ref{eqn-p3}) is redundant, because the 
addition of all the rows in (\ref{eqn-p3})-(\ref{eqn+p3}) yields 
the trivial equality $0=0\,\y$ (recall that $\Theta$ is a matrix 
with only $\{0,\pm1\}$ entries and with exactly two nonzero 
entries $1$ and $-1$ in each column).
\end{rem}

\begin{rem}\label{rem44}
The matrix $\Theta\in\zz^{s\times{p}}$ in (\ref{matrix-3}) is 
independent of $b$ and contains all information about all 
the integer problems $Ax=b$ for $x\in\nn^n$ and $b\in\nn^m$ 
with $b\leq\beta$.
\end{rem}

The linear system $\bb=\Theta\,\y$ in (\ref{eqn-p3}) of 
Theorem~\ref{Lasserre} is quite interesting in the sense that 
existence of a real solution $\y\geq0$ is totally equivalent to 
the existence of a nonnegative integer solution $x\in\nn^n$ for 
the problem $Ax=b$, but it is not easy to see at first glance what 
is the relation between the two solutions $\y$ and $x$. Moreover 
equation (\ref{eqn-p3}) in Theorem~\ref{Lasserre} is not unique at 
all. We can multiply both sides of (\ref{eqn-p3}) by any square 
invertible matrix $M\in\rr^{s\times{s}}$ and we obtain that 
$\y\geq0$ is a real solution to (\ref{eqn-p3}) if and only 
if $\y$ is also a solution for
$$M\,\bb\,=\,M\,\Theta\,\y.$$

An interesting issue is to determine what is an adequate matrix 
$M$ such that relationship between the solutions $\y\geq0$ 
and $x\in\nn^n$ is evident. We claim that one of such matrix $M$ 
is the square Vandermonde matrix $\Delta\in\nn^{s\times{s}}$ presented 
in (\ref{matrix-2}) of Definition~\ref{main-def}. We use the 
convention that $0^0=1$ and $0^z=0$ whenever $0\neq z\in\nn^m$.

\begin{theorem}\label{polynomial}
Let $\beta,b\in\nn^m$ and $A\in\nn^{m\times{n}}$ be such that~:
\begin{equation}\label{poly-1}
\beta\geq{b}\quad\hbox{and}\quad\beta\geq
A_k\quad\hbox{for all}\quad1\leq{k}\leq{n}.
\end{equation}
The following three statements {\rm\bf (a)}, 
{\rm\bf (b)} and {\rm\bf (c)} are all equivalent~:
\begin{description}
\item[(a)] The linear system $Ax=b$ has a solution $x\in\nn^n$.

\vspace{1em}\item[(b)] There is a real nonnegative solution 
$\y=(y[k,u])\in\rr^p$ to the system of linear equations
\begin{equation}\label{poly-2}
b^z-0^z=\sum_{k=1}^n\;
\sum_{\genfrac..{0pt}1{u\in\nn^m,}{u\leq\beta-A_k}}
y[k,u]\big((u{+}A_k)^z-u^z\big),
\end{equation}
for every $z\in\nn^m$ with $0\leq{z}\leq\beta$.

\vspace{1em}\item[(c)] There is a real nonnegative solution 
$\y=(y[k,u])\in\rr^p$ to the system of linear equations
\begin{equation}\label{poly-3}
z^b-1=\sum_{k=1}^n\;
\sum_{\genfrac..{0pt}1{u\in\nn^m,}{u\leq\beta-A_k}}
y[k,u]\big(z^{u+A_k}-z^u\big),
\end{equation}
for every $z\in\nn^m$ with $0\leq{z}\leq\beta$.
\end{description}
\end{theorem}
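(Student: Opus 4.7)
The plan is to use Theorem~\ref{Lasserre} as a pivot: that theorem already equates (a) with the existence of a nonnegative $\y\in\rr^p$ solving $\bb=\Theta\y$, so it suffices to show that each of (b) and (c) is merely a linear reformulation of that system, with the invertible Vandermonde matrix $\Delta$ of Definition~\ref{main-def} playing the role of change of basis.

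For (a)~$\Leftrightarrow$~(c), both sides of (\ref{poly-3}) are polynomials in $z$ of multivariate degree bounded by $\beta$. The space of such polynomials has dimension $s=\prod_j(1{+}\beta_j)$, and the evaluation map on the grid $\{z\in\nn^m:0\leq z\leq\beta\}$ has matrix (up to transposition) the Vandermonde $\Delta$; since $\Delta$ is invertible by Lemma~\ref{Kroneker}, any polynomial of multivariate degree $\leq\beta$ is determined by its values on that grid. Hence the family of equations (\ref{poly-3}) indexed by $z\leq\beta$ is equivalent to the single polynomial identity in $z$; after setting $Q_k(z):=\sum_{u\leq\beta-A_k}y[k,u]\,z^u$ so that $\sum_u y[k,u](z^{u+A_k}-z^u)=Q_k(z)(z^{A_k}-1)$, this identity becomes precisely statement~(b) of Theorem~\ref{Lasserre}, which that theorem equates with~(a).

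For (a)~$\Leftrightarrow$~(b), I would multiply $\bb=\Theta\y$ on the left by $\Delta$ and compute the resulting system entry by entry. Using $\bb[0]=-1$, $\bb[b]=1$ together with the formula (\ref{matrix-2}), the $z$-th entry of $\Delta\bb$ is $\sum_{w\leq\beta}w^z\bb[w]=b^z-0^z$; and using (\ref{matrix-3}), the $(z;(k,u))$-entry of $\Delta\Theta$ is $\sum_{w\leq\beta}w^z\Theta[w;(k,u)]=(u+A_k)^z-u^z$. Thus $\Delta\bb=\Delta\Theta\y$ is line-by-line the system (\ref{poly-2}). Since $\Delta$ is invertible, a nonnegative $\y$ solves $\bb=\Theta\y$ iff it solves $\Delta\bb=\Delta\Theta\y$, and Theorem~\ref{Lasserre} finishes the argument.

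The main obstacle is really nothing more than tracking the Kronecker-Vandermonde index conventions of Definition~\ref{main-def} through the matrix products; beyond that, the proof is a direct consequence of Theorem~\ref{Lasserre} plus the invertibility of $\Delta$. Conceptually, (b) and (c) are the \emph{evaluation} and \emph{coefficient} incarnations of one and the same polynomial identity (\ref{eqn-p2}), and the matrix $\Delta$ is exactly the change of basis linking them, in line with the introduction's claim that $\Delta$ transforms the discrete Farkas certificate of \cite{lasserrefarkas,lasserreduality} into the polynomial certificate (\ref{different}).
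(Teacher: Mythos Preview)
Your proposal is correct and follows essentially the same route as the paper: for (a)~$\Leftrightarrow$~(b) you left-multiply $\bb=\Theta\y$ by the invertible Vandermonde $\Delta$ and compute $(\Delta\bb)[z]=b^z-0^z$ and $(\Delta\Theta)[z;(k,u)]=(u{+}A_k)^z-u^z$, exactly as the paper does. For (a)~$\Leftrightarrow$~(c) the paper instead left-multiplies by the transpose $\Delta'$ and computes $(\Delta'\bb)[z]=z^b-1$ and $(\Delta'\Theta)[z;(k,u)]=z^{u+A_k}-z^u$; your interpolation argument (a polynomial of multidegree $\leq\beta$ is determined by its values on the grid, so (\ref{poly-3}) at all $z\leq\beta$ is equivalent to the polynomial identity (\ref{eqn-p2})) is precisely the conceptual content of that $\Delta'$-multiplication, since $\Delta'$ is the coefficients-to-values map for that grid.
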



\begin{proof} The equivalence \textbf{(a)} $\Leftrightarrow$ 
\textbf{(b)} easily follows from Theorem~\ref{Lasserre} after 
noticing that the $[z;(k,u)]$-entry (resp. $[z]$-entry) of the 
product $\Delta\Theta$ (resp. $\Delta\bb$) is given by 
\begin{eqnarray}\label{poly-4}
(\Delta\Theta)[z;(k,u)]&=&\big((u{+}A_k)^z-u^z\big)\in\nn,\\
\label{poly-5}\hbox{resp.}\quad(\Delta\bb)[z]&=&\big(b^z-0^z\big),
\end{eqnarray}
for all nonnegative indexes $z\in\nn^m$ and $(k,u)\in\nn{\times}\nn^m$ 
with $z\leq\beta$, $u\leq\beta{-}A_k$ and $1\leq{k}\leq{n}$. 

The equivalence \textbf{(a)} $\Leftrightarrow$ \textbf{(c)} is proved 
in a similar way; we only need to observe that multiplying by the 
transpose $\Delta'$ is equivalent to interchange the exponentials 
$z\to{w^z}$ by powers $z\to{z^w}$, so that:
\begin{eqnarray*}
(\Delta'\,\Theta)[z;(k,u)]&=&\big(z^{u+A_k}-z^u\big)\in\nn,\\
\hbox{and}\quad(\Delta'\,\bb)[z]&=&\big(z^b-z^0\big).
\end{eqnarray*}
\end{proof}

\begin{rem}\label{rem55}
A nonnegative real vector $\y\in\rr^p$ is a solution of equation 
(\ref{eqn-p3}) in Theorem~\ref{Lasserre} if and only if $\y$ is also 
a solution of (\ref{poly-2}) and (\ref{poly-3}). Moreover the linear 
system (\ref{poly-3}) can be directly deduced from (\ref{eqn-p2}) 
by evaluating at $z\in\nn^m$ with $z\leq{b}$.
\end{rem} 

\section{A hierarchy of linear programming relaxations}
Let $\beta,b\in\nn^m$ and $A\in\nn^{m\times{n}}$ be such that 
$\beta\geq{b}$ and $\beta\geq{A_k}$ for all $1\leq{k}\leq{n}$. Consider 
the matrices $\Delta\in\rr^{s\times{s}}$ and $\Theta\in\rr^{s\times{p}}$ 
given in equations (\ref{matrix-2})-(\ref{matrix-3}) of 
Definition~\ref{main-def}; and define the new vector $\bb\in\zz^s$ 
whose entries are indexed with the non negative vector $w\in\nn^m$, 
so that the $[k]$-entry is given by (\ref{eqn+p3})~:
\begin{equation}\label{main-vector}
\bb[w]\,:=\,\left\{\begin{array}{cl}
-1&\hbox{if~}w=0,\\ 1&\hbox{if~}w=b,\\ 
0&\hbox{otherwise,}\end{array}\right.
\quad\forall\;0\leq{w}\leq\beta.
\end{equation}

The fact that $\Delta$ is invertible (according to Lemma~\ref{Kroneker}) 
allows us to give two equivalent definitions of the following polytope 
in $\rr^p$,
\begin{eqnarray}\label{main-polytope}
\mathcal{P}&:=&\{\y\in\rr^p:\y\geq0,
\,\Delta\Theta\y=\Delta\bb\}\\
\nonumber&=&\{\y\in\rr^p:\y\geq0,
\,\Theta\y=\bb\}\,\subset\,\rr^p.
\end{eqnarray}

As it is state in Remark~\ref{rem55}, any nonnegative vector $\y\in\rr^p$ 
is a solution to (\ref{poly-2}) if and only if $\y$ lies in $\mathcal{P}$. 
Let $\y=(y[k,u])$ be an element of $\mathcal{P}$. 
The entries of $\y$ are indexed according to 
Theorems~\ref{Lasserre} and~\ref{polynomial}. 
Equation~(\ref{poly-2}) implies that $x\in\rr^n$ with $x_k:=\sum_uy[k,u]$, 
is a solution to $Ax=b$. Indeed $b_j=\sum_kA_{j;k}x_k$ for 
$1\leq{j}\leq{m}$ after evaluating (\ref{poly-2}) at $z=e_j$, the 
basic vector whose entries are all equal to zero, except
the $j$-entry which is equal to one, so that $w^{e_j}=w_j$ for all 
$w\in\zz^m$. The relationship between $x$ and $\y$ 
comes from a multiplication by some matrix $E\in\nn^{n\times{p}}$, that is~:
\begin{equation}\label{matE}
x=E\y\quad\mbox{with}\quad{E}:=\hspace{-1ex}
\bordermatrix{&p_1&p_2&&p_n\cr&\overbrace{1\ldots1}
&\overbrace{0\ldots0}&\cdots&\overbrace{0\ldots0}\cr
&0\ldots0&1\ldots1&\cdots&0\ldots0\cr
&\cdots&\cdots&\cdot&\cdots\cr
&0\ldots0&0\ldots0&\cdots&1\ldots1\cr},
\end{equation}
where $p_k:=\prod_{j=1}^m\big(1{+\beta_j-}A_{j;k}\big)$ for 
$1\leq{k}\leq{n}$, so that $p=\sum_kp_k$ according to~(\ref{main-1}) 
in Definition~\ref{main-def}. We obtain the following result as a 
consequence of Theorem \ref{polynomial}.

\begin{corollary}\label{main}
Let $\beta,b\in\nn^m$ and $A\in\nn^{m\times{n}}$ be such that 
$\beta\geq{b}$ and $\beta\geq{A_k}$ for each index $1\leq{k}\leq{n}$. 

{\bf (a):} Existence of a nonnegative integral solution $x\in\nn^n$ to 
the linear system $Ax=b$ reduces to the existence of a nonnegative real 
solution $\y=(y[k,u])\in\rr^p$ to the system of linear equations~:
\begin{eqnarray}
\label{m-a1}
b_i&=&\y'A^{(e_j)},\quad\forall\;1\leq{j}\leq{m};\\
\label{m-a2}
b_ib_j&=&\y'A^{(e_i+e_j)},\quad\forall\;1\leq{i}\leq{j}\leq{m};\\
\nonumber\dots&=&\cdots\\
\label{m-am}
b_1^{z_1}\cdots{}b_m^{z_m}&=&\y'A^{(z)},\quad
\begin{pmatrix}\forall\;0\leq{z_j}\leq{b_j},\\
z_1{+}\cdots{+}z_m=\delta;\end{pmatrix}\\
\nonumber\dots&=&\cdots\\
\label{m-ab}
b_1^{\beta_1}\cdots{}b_m^{\beta_m}&=&\y'A^{(\beta)};
\end{eqnarray}
for some appropriate nonnegative integer vectors 
$A^{(e_j)},A^{(e_i+e_j)},A^{(z)}\in\nn^p$ with $z\in\nn^m$ and $z\leq\beta$. 
The parameter $\delta\geq1$ in (\ref{m-am}) is the degree of the 
monomial $b\mapsto{b^z}$, and we use the indexation for the entries 
of $\y$ given in Theorems~\ref{Lasserre} and~\ref{polynomial}. In 
particular all vectors $A^{(z)}$ are independent of $b$, and the 
entries of $A^{(e_j)}\in\nn^{p}$ in (\ref{m-a1}) are given by~: 
$$A^{(e_j)}[k,u]\,:=\,A_{j;k},$$
for all $u\in\nn^m$, $1\leq{j}\leq{m}$, and 
$1\leq{k}\leq{n}$ with $u\leq\beta{-}A_k$.

\bigskip
{\bf (b):} Let $\y\in\rr^p$ be a nonnegative real solution 
to (\ref{m-a1})-(\ref{m-ab}) and $\widehat{x}=E\y$, i.e. 
$\widehat{x}_k:=\sum_uy[k,u]$ for $1\leq{k}\leq{n}$. Then $\widehat{x}$ 
belongs to the integer hull $H$ of $\{x\in\rr^n:Ax=b,x\geq0\}$. Therefore 
a certificate of $b\neq{Ax}$ for any $x\in\nn^n$ is obtained as soon as 
any subsystem of (\ref{m-a1})-(\ref{m-ab}) has no solution $\y\in\rr^p$. 

Moreover, any nonnegative vector $\y\in\rr^p$ is a solution to
(\ref{m-a1})-(\ref{m-ab}) if and only if $\y\in\mathcal{P}$ (with
$\mathcal{P}$ as in (\ref{main-polytope})); and there is a one-to-one correspondance
between the vertices of $H$ and $\mathcal{P}$.
\end{corollary}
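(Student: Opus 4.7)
The plan is to deduce Corollary~\ref{main} as a direct packaging of Theorem~\ref{polynomial}, then exploit the total unimodularity of $\Theta$ from Lemma~\ref{Kroneker} to pass from real feasibility in $\mathcal{P}$ to convex combinations of integer feasible points of $\{Ax=b,\,x\geq0\}$.

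For part~(a), I would rewrite the polynomial identity~(\ref{poly-2}) of Theorem~\ref{polynomial} as one scalar equation per index $z\in\nn^m$ with $0\leq z\leq\beta$. Fixing such a $z$, the right-hand side becomes $\y'A^{(z)}$ with $A^{(z)}[k,u]:=(u{+}A_k)^z-u^z$; these entries are nonnegative integers because $u{+}A_k\geq u$ coordinatewise. The left-hand side is $b^z-0^z$, which equals $b^z$ for $z\neq0$ (the $z=0$ case is the redundant equation of Remark~\ref{rem33}). Specialising to $z=e_j$ gives $(u{+}A_k)^{e_j}-u^{e_j}=A_{j;k}$, matching the explicit formula $A^{(e_j)}[k,u]=A_{j;k}$. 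The indices $z=e_i{+}e_j$, and successively up to $z=\beta$, produce equations~(\ref{m-a2})--(\ref{m-ab}). Since the equivalence (a)$\Leftrightarrow$(b) of Theorem~\ref{polynomial} is exactly this system read line by line, part~(a) follows.

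For part~(b), suppose $\y\in\rr^p$ is a nonnegative real solution to (\ref{m-a1})--(\ref{m-ab}), equivalently $\y\in\mathcal{P}$ by Remark~\ref{rem55}. Setting $\widehat{x}=E\y$ and reading off (\ref{m-a1}) gives $b_j=\sum_{k,u}y[k,u]\,A_{j;k}=\sum_k A_{j;k}\widehat{x}_k$, so $A\widehat{x}=b$ with $\widehat{x}\geq0$. To place $\widehat{x}$ in the integer hull $H$ I invoke Lemma~\ref{Kroneker}: $\Theta$ is totally unimodular and $\bb$ is integer, so every vertex of $\mathcal{P}=\{\y\geq0:\Theta\y=\bb\}$ is integer. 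Writing $\y=\sum_v\lambda_v\y^v$ as a convex combination of its vertices, the argument in (c)$\Rightarrow$(a) of Theorem~\ref{Lasserre} shows that each $E\y^v\in\nn^n$ is an integer feasible point and hence lies in $H$; by linearity of $E$ the convex combination $\widehat{x}=\sum_v\lambda_v E\y^v$ also lies in $H$. The infeasibility certificate is then the contrapositive: if any subsystem of (\ref{m-a1})--(\ref{m-ab}) has no nonnegative real solution, neither does the whole system, so part~(a) rules out any integer $x$ with $Ax=b$.

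The subtle point I expect to be the main obstacle is the claimed one-to-one correspondence between the vertices of $H$ and $\mathcal{P}$. Projecting vertices of $\mathcal{P}$ by $E$ produces integer points of $H$, but distinct vertices of $\mathcal{P}$ can well project to the same point of $H$, so some canonical choice is needed to extract a bijection. In the reverse direction, given a vertex $x\in\nn^n$ of $H$, the telescoping polynomials $Q_k$ from the proof of Theorem~\ref{Lasserre}(a)$\Rightarrow$(b) furnish a specific integer $\y\in\mathcal{P}$ with $E\y=x$, which I would argue is a vertex of $\mathcal{P}$ by a support-minimality argument using total unimodularity of $\Theta$. Matching the two constructions---so that each vertex of $H$ is paired with a unique vertex of $\mathcal{P}$ and conversely---is where the real work lies, precisely because $E$ is not injective on $\mathcal{P}$ in general.
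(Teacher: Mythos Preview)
Your proposal follows essentially the same route as the paper: part~(a) is exactly the line-by-line reading of~(\ref{poly-2}) with $A^{(z)}[k,u]=(u{+}A_k)^z-u^z$, and in part~(b) the paper likewise uses total unimodularity of $\Theta$ to make every vertex of $\mathcal{P}$ integral, writes $\y$ as a convex combination of vertices, and projects by $E$ into $H$.

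For the vertex correspondence that you flag as the main obstacle, the paper does less work than you anticipate: it uses precisely your lifting $x\mapsto\y(x)$ via the telescoping polynomials of Theorem~\ref{Lasserre}, and dispatches the claim that $\y(x^{[c]})$ is a vertex of $\mathcal{P}$ in one line---if not, write it as a proper convex combination $\sum_\ell\xi_\ell\y^{[\ell]}$ of vertices of $\mathcal{P}$, apply $E$, and obtain $x^{[c]}=\sum_\ell\xi_\ell E\y^{[\ell]}$ with each $E\y^{[\ell]}\in H$, contradicting extremality of $x^{[c]}$. Your concern that distinct vertices of $\mathcal{P}$ may project to the same point of $H$ (so that this is not literally a contradiction, and so that a true bijection is not established) is legitimate; the paper does not address it, so you are not missing any further idea---the paper's ``one-to-one correspondence'' is asserted at exactly the level of detail you see here.
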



\begin{proof}
{\bf (a)} It is straightforward to see that (\ref{m-a1})-(\ref{m-ab}) is just 
an explicit description of (\ref{poly-2}) in Theorem~\ref{polynomial}. 
The right-hand-side terms of (\ref{m-a1})-(\ref{m-ab}) are the function 
$b^z{-}0^z$ evaluated at $z\in\nn^m$ with $z\leq\beta$ and $\sum_jz_j$ 
equal to $1$, $2$, $3$, etc., until the maximal degree $\sum_j\beta_j$. 
Each vector $A^{(z)}\in\nn^p$ is then the transpose of the $[z]$-row 
of the product $\Delta\Theta$ calculated in (\ref{poly-4}), so that 
$$A^{(z)}[k,u]:=(\Delta\Theta)[z;(k,u)]=\big((u{+}A_k)^z-u^z\big)\in\nn,$$ 
for all appropriated indices $z$, $k$, and $u$. Evaluating (\ref{poly-2}) 
at $z=0$ yields the trivial identity $0=0\y$ (recall Remark~\ref{rem33}), 
because we are using the convention that $0^0=1$ and $0^w=0$ for every 
$w\neq0$. While evaluating (\ref{poly-2}) at $z=e_j$,
yields the linear constraints 
$$b_j\,=\,\sum_{k=1}^nA_{j;k}
\sum_{\genfrac..{0pt}1{u\in\nn^m,}{u\leq\beta-A_k}}y[k,u]
\quad\forall\;1\leq{j}\leq{m},$$
because $w^{e_j}=w_j$ for all $w\in\zz^m$. Constraints (\ref{m-a1}) 
are easily deduced after defining the vectors $A^{(e_j)}\in\nn^p$ by 
$A^{(e_j)}[k,u]:=A_{j;k}$ for all $u$ and $1\leq{k}\leq{n}$. We also have 
that $b=A\widehat{x}$ with $\widehat{x}=E\y$ and $E\in\nn^{n\times{p}}$ 
the matrix given in (\ref{matE}), e.g. $\widehat{x}_k=\sum_uy[k,u]$ for 
$1\leq{k}\leq{n}$. 

\bigskip
{\bf (b)} Let $\mathcal{P}\in\rr^p$ be the polytope given in 
(\ref{main-polytope}), and $H\subset\rr^n$ be the integer hull of 
$\{x\in\rr^n:Ax=b,x\geq0\}$. From Theorem~\ref{Lasserre} 
and the proof of Theorem~\ref{polynomial}, any nonnegative vector 
$\y\in\rr^p$ is a solution to (\ref{poly-2}) if and only if $\y\in\mathcal{P}$, because $\Theta\y=\bb$; and we have already stated in the 
proof of \textbf{(a)} that (\ref{m-a1})-(\ref{m-ab}) is completely 
equivalent to (\ref{poly-2}). We also deduced 
that $\widehat{x}=E\y$ is a solution to $A\widehat{x}=b$ for any feasible 
solution $\y\in\mathcal{P}$. Conversely, given any nonnegative integer 
solution $x\in\nn^n$ to $Ax=b$ and working as in the proof of 
Theorem~\ref{Lasserre}, we can associate with $x$ a nonnegative integer 
vector $\y(x)\in\nn^p$ such that $\Theta\y(x)=\bb$ and 
$x=E\y(x)$; hence $\y(x)\in\mathcal{P}$.

Let $\{\y^{[\ell]}\in\nn^p\}$ be the vertices of $\mathcal{P}$. Every 
$\y^{[\ell]}$ has nonnegative integer entries, because $\Theta$ is 
totally unimodular and the vector $\bb$ in (\ref{main-vector}) has 
integer entries. Each vector
$E\y^{[\ell]}\in\nn^n$ has nonnegative integer entries and 
satisfies $AE\y^{[\ell]}=b$, so that 
$E\y^{[\ell]}\in{H}$. Any feasible solution $\y\in\mathcal{P}$ can 
be expressed as the sum $\y=\sum_\ell\xi_\ell\y^{[\ell]}$ with 
$\sum_\ell\xi_\ell=1$ and $\xi_\ell\geq0$ for each $l$. Hence,
$$E\,\y\,=\sum_{\ell}\xi_{\ell}\,E\,\y^{[\ell]}\quad
\hbox{with}\quad{E}\y^{[\ell]}\in{H}\quad\forall\;\ell,$$
and so $\widehat{x}:=E\y\in H$. Conversely, let $\{x^{[c]}\in\nn^n\}$ be the vertices
of $H$. We claim that every $\y(x^{[c]})$ is a vertex of $\mathcal{P}$; 
otherwise if $\y(x^{[c]})=\sum_{\ell}\xi_{\ell}\y^{[\ell]}$ for 
$0\leq\xi_\ell<1$, then 
$$x^{[c]}=E\y(x^{[c]})=\sum_{\ell}\xi_{\ell}E\y^{[\ell]}
\quad\hbox{with}\quad{E}\y^{[\ell]}\in{H},$$
is a  combination of vertices $(E\y^{[\ell]})$ of $H$,
in contradiction with the fact that $x^{[c]}$ itself is a vertex of $H$.
\end{proof}

\subsection{A hierarchy of LP-relaxations}

Let $|z|:=z_1{+}\cdots{+}z_m$ for every $z\in\nn^m$. 
Consider the following integer and linear programming 
problems, for $1\leq\ell\leq|\beta|$,
\begin{eqnarray}
\label{ip}J_\bullet&:=&\min_x\,\{\,c'x\,:\,Ax=b\,,\,x\in\nn^n\,\};\\
\label{lpl}J_\ell&:=&\min_\y\bigg\{c'E\,\y:\,
\begin{matrix}b^z=\y'A^{(z)},\,\y\in\rr^p,\,\y\geq0,\\
z\in\nn^m,\,1\leq|z|\leq\ell\end{matrix}\bigg\}.
\end{eqnarray}

We easily have the following result.

\begin{corollary}\label{hierarchy}
Let $\beta,b\in\nn^m$ and $A\in\nn^{m\times{n}}$ be such that 
$\beta\geq{b}$ and $\beta\geq{A_k}$ for every index $1\leq{k}\leq{n}$. 
Let $J_\bullet$ and $J_\ell$ be as in (\ref{ip}) and (\ref{lpl}) 
respectively. 

Then~: $J_1\,\leq\,J_2\leq\ldots\leq\,J_{|\beta|}$ 
and $J_{|\beta|}\,=\,J_\bullet$.
\end{corollary}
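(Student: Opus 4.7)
The plan is to prove the monotonicity chain by a direct feasibility argument and then identify the top of the hierarchy with the integer program by appealing to Corollary~\ref{main}.

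First I would observe that, for each $\ell$, the feasible set of $J_\ell$ in (\ref{lpl}) is the polyhedron
$$\mathcal{P}_\ell\,:=\,\{\y\in\rr^p:\y\geq0,\;b^z=\y'A^{(z)}\text{ for all }z\in\nn^m,\,1\leq|z|\leq\ell\}.$$
As $\ell$ increases, only additional linear constraints are imposed, so $\mathcal{P}_{\ell+1}\subseteq\mathcal{P}_\ell$. Since the objective $\y\mapsto c'E\y$ is unchanged, this immediately gives $J_\ell\leq J_{\ell+1}$ for all $1\leq\ell<|\beta|$.

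Next I would identify the top relaxation with the IP. When $\ell=|\beta|$ the feasible set $\mathcal{P}_{|\beta|}$ is exactly the collection of nonnegative solutions of (\ref{m-a1})--(\ref{m-ab}), which by Corollary~\ref{main}(b) coincides with the polytope $\mathcal{P}$ of (\ref{main-polytope}). That corollary also tells us two further facts I intend to use: (i) every vertex $\y^{[\ell]}$ of $\mathcal{P}$ is integral (since $\Theta$ is totally unimodular and $\bb$ integral) and $E\y^{[\ell]}$ lies in the integer hull $H$ of $\{x\in\rr^n:Ax=b,\,x\geq0\}$; and (ii) the map $\y\mapsto E\y$ induces a bijection between the vertices of $\mathcal{P}$ and the vertices of $H$, with inverse $x\mapsto \y(x)$ as constructed in the proof of Theorem~\ref{Lasserre}.

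With this in hand, the equality $J_{|\beta|}=J_\bullet$ follows from the standard fact that a linear objective attains its minimum over a polytope at a vertex. Indeed, let $\y^\star$ be an optimal vertex of $\mathcal{P}$ for $\min c'E\y$; then $x^\star:=E\y^\star\in H$ is an integer feasible point of the IP with $c'x^\star=c'E\y^\star=J_{|\beta|}$, giving $J_\bullet\leq J_{|\beta|}$. Conversely, if $x^\star\in\nn^n$ is optimal for the IP, then $\y(x^\star)\in\mathcal{P}$ satisfies $E\y(x^\star)=x^\star$, so $c'E\y(x^\star)=c'x^\star=J_\bullet$, showing $J_{|\beta|}\leq J_\bullet$.

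The step I expect to require the most care is the second bullet of the identification: we need $\min_{\y\in\mathcal{P}}c'E\y=\min_{x\in H}c'x$, and in particular we must ensure that the correspondence between vertices of $\mathcal{P}$ and vertices of $H$ preserves the objective value, which it does because $c'E\y$ is exactly $c'x$ when $x=E\y$. Provided the reduction of Section~3.1 has made $\p$ bounded (so that both minima are finite and attained), this closes the argument.
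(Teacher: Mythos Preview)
Your proposal is correct and follows essentially the same route as the paper: monotonicity from nested feasible sets, then the identification $J_{|\beta|}=J_\bullet$ via the vertex correspondence between $\mathcal{P}$ and the integer hull $H$ established in Corollary~\ref{main}. The paper's proof is terser but uses exactly the same two inequalities, taking an optimal vertex $x^{[d]}$ of $H$ with its lift $\y(x^{[d]})\in\mathcal{P}$ for one direction and an optimal vertex $\y^{[\kappa]}$ of $\mathcal{P}$ with $E\y^{[\kappa]}\in H$ for the other.
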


\begin{proof}
It is obvious that $J_\ell\leq{J}_{\ell^*}$ 
whenever $\ell\leq\ell^*$. Moreover~:
$$J_\bullet=\min_x\{c'x:x\in{H}\}\quad\hbox{and}
\quad{}J_{|\beta|}=\min_\y\{c'E\y:\y\in\mathcal{P}\},$$
where $H$ is the integer convex hull of $\{x\in\rr^n:Ax=b,x\geq0\}$ 
and $\mathcal{P}$ is given in (\ref{main-polytope}). Thus 
$J_\bullet=c'x^{[d]}$ for some vertex $x^{[d]}$ of $H$. Working as 
in the proof of Corollary~\ref{main}, $\y(x^{[d]})$ is 
a vertex of $\mathcal{P}$ and $x^{[d]}=E\y(x^{[d]})$, so that 
$$J_{|\beta|}\,\leq\,c'E\,\y(x^{[d]})\,=\,J_\bullet.$$

On the other hand, $J_{|\beta|}=c'E\y^{[\kappa]}$ for 
some vertex $\y^{[\kappa]}$ of $\mathcal{P}$. Again, as in 
the proof of Corollary~\ref{main}, $E\y^{[\kappa]}\in{H}$,
so that
$$J_\bullet\,\leq\,c'E\,\y^{[\kappa]}\,=\,J_{|\beta|}.$$
\end{proof}

Hence the sequence of LP problems $\{J_\ell\}_{1\leq\ell\leq|\beta|}$ 
in (\ref{lpl}) provides a monotone sequence of lower bounds for 
$J_\bullet$ with finite convergence limit $J_{|\beta|}=J_\bullet$. 
Observe that all linear programs (\ref{lpl}) have the same number 
$p$ of variables (since $\y\in\rr^p$) and an increasing number of 
constraints starting from $m$ when $\ell=1$ to $s=\prod_j(\beta_j{+}1)$ 
when $\ell=|\beta|$. The most interesting case happens when we 
take $b=\beta\geq{A_k}$ (for every index $1\leq{k}\leq{n}$).

Of course $p\,(<ns)$ is large; but in principle the LP problems (\ref{lpl}) 
are amenable to computation for reasonable values of $\ell$ by using 
column generation techniques (in order to avoid handling the full 
vector $\y\in\rr^p$). However the vectors $A^{(z)}\in\nn^p$ may contain 
very large values when $|z|=\ell$ is large, and so some numerical 
ill-conditioned cases are likely to appear for large values of $\ell$.

Therefore it is not completely clear whether the hierarchy of 
linear relaxations presented in Corollary~\ref{main} is useful from a 
computational point of view, but we think this hierarchy provides some 
new insights into integer programming problems from a theoretical point 
of view.

\subsection{\bf A polynomial certificate.}

Let $\beta,b\in\nn^m$ and $A\in\nn^{m\times{n}}$ be such 
that $\beta\geq{b}$ and $\beta\geq{A_k}$ for every index 
$1\leq{k}\leq{n}$. Consider the matrices $\Delta\in\nn^{s\times{s}}$ and 
$\Theta\in\zz^{s\times{p}}$ given in (\ref{matrix-2})-(\ref{matrix-3}) 
of Definition~\ref{main-def}. Lemma~\ref{Kroneker} states that $\Delta$ 
is invertible, and so Theorem~\ref{Lasserre} implies that existence of 
a nonnegative integer solution $x\in\nn^n$ to $Ax=b$ is equivalent to 
the existence of a nonnegative real solution $\y\in\rr^p$ to 
\begin{equation}\label{eqn-21}
\Delta\Theta\,\y\,=\,\Delta\bb\quad\hbox{and}\quad\y\,\geq\,0,
\end{equation}
where $\bb\in\zz^s$ is given in (\ref{eqn+p3}) or (\ref{main-vector}). 
Moreover it was already stated in (\ref{poly-5}) that the entries of 
the product $\Delta\bb$ can be indexed with the nonnegative vector 
$z\in\nn^m$ with $z\leq\beta$, and that they have the simple 
representation ~:
\begin{equation}\label{eqn-22}
(\Delta\bb)[z]\,=\,(b^z-0^z)\,=\,\left\{\begin{array}{cl}
0&\hbox{if}\;z=0,\\ b^z&\hbox{if}\;z\neq0,\end{array}\right.
\end{equation}
using the conventions $0^0=1$ and $0^w=0$ for every $w\neq0$. 

To obtain a polynomial certificate we apply Farkas lemma to the linear programming problem 
(\ref{eqn-21}). 
Consider the polyhedral cone $C_\beta\subset\rr^s$ defined by 
\begin{equation}\label{coneC}
C_\beta\,:=\,\{\,\xi\in\rr^s\,:\,(\Delta\Theta)'\xi\geq0\,\}.
\end{equation}
With every element $\xi=(\xi_z)\in C_b$, we can associate a 
specific polynomial $p_\xi\in\rr[u_1,\ldots,u_m]$ defined by:
\begin{equation}
\label{pollike}
u\mapsto{p_\xi}(u):=\sum_{\genfrac..{0pt}1
{z\in\nn^m,}{z\neq0,\,z\leq\beta}}\xi_z\,u^z,\end{equation}
and notice that in view of (\ref{eqn-22}), $p_\xi(b)=\xi'\Delta\bb$.

\begin{theorem}\label{certificate}
Let $\beta,b\in\nn^m$ and $A=[A_1|\cdots|A_n]\in\nn^{m\times{n}}$ 
be such that $\beta\geq{b}$ and $\beta\geq{A_k}$ for every index 
$1\leq{k}\leq{n}$. Consider the polyhedral cone $C_\beta\subset\rr^s$ 
given in (\ref{coneC}) and the following two statements~:
\begin{description}
\item[(a)] The system $Ax=b$ has an integer solution $x\in\nn^n$.

\item[(b)] $p_\xi(b)<0$ for some $\xi\in{C_\beta}$ and $p_\xi\in\rr[u]$
as in (\ref{pollike}).
.
\end{description}

\noindent
Then one and only one of statements {\bf (a)} or {\bf (b)} is true.
\end{theorem}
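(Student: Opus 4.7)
The plan is to derive this theorem of the alternative as an immediate application of the standard (continuous) Farkas lemma to the linear system already built in the previous results. By Theorem~\ref{Lasserre} together with the equivalence noted around (\ref{eqn-21}), statement \textbf{(a)} holds if and only if the system
\begin{equation*}
\Delta\Theta\,\y\,=\,\Delta\bb,\qquad\y\geq0,
\end{equation*}
admits a solution $\y\in\rr^p$; here invertibility of $\Delta$ from Lemma~\ref{Kroneker} is used only to justify that multiplying (\ref{eqn-p3}) by $\Delta$ does not change the solution set. So the proof reduces to characterizing infeasibility of this linear system.

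Next I would invoke the classical Farkas lemma in the form: $\{M\y=c,\,\y\geq0\}$ is infeasible if and only if there exists $\xi$ with $M'\xi\geq0$ and $c'\xi<0$. Taking $M:=\Delta\Theta$ and $c:=\Delta\bb$, infeasibility is equivalent to the existence of $\xi\in\rr^s$ with $(\Delta\Theta)'\xi\geq0$ (that is, $\xi\in C_\beta$ as defined in (\ref{coneC})) and $(\Delta\bb)'\xi<0$.

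The last step is to rewrite the inner product $(\Delta\bb)'\xi$ in the polynomial language of (\ref{pollike}). By (\ref{eqn-22}), the entry $(\Delta\bb)[z]$ equals $b^z$ for $z\neq0$ and vanishes for $z=0$, so
\begin{equation*}
(\Delta\bb)'\xi\,=\,\sum_{0\neq z\leq\beta}\xi_z\,b^z\,=\,p_\xi(b).
\end{equation*}
Thus infeasibility of the lifted system is equivalent to the existence of $\xi\in C_\beta$ with $p_\xi(b)<0$, which is exactly statement \textbf{(b)}. Chaining this with the equivalence from the first paragraph yields that $\neg\textbf{(a)}\Leftrightarrow\textbf{(b)}$, so precisely one of \textbf{(a)} and \textbf{(b)} holds.

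No step looks substantive in itself; the only real care needed is bookkeeping of the index set (enforcing $z\leq\beta$ and dropping the trivial $z=0$ coordinate, where $b^0-0^0=0$), and a single sentence explaining why passage from $\Theta\y=\bb$ to $\Delta\Theta\y=\Delta\bb$ preserves the feasible set; both are handled by the already-established Lemma~\ref{Kroneker} and (\ref{eqn-22}).
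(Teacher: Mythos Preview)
Your proposal is correct and follows essentially the same approach as the paper: reduce \textbf{(a)} to feasibility of $\Delta\Theta\,\y=\Delta\bb$, $\y\geq0$ via Theorem~\ref{Lasserre} and the invertibility of $\Delta$, apply the standard Farkas lemma, and use (\ref{eqn-22}) to identify $\xi'\Delta\bb$ with $p_\xi(b)$. The only cosmetic difference is that the paper phrases Farkas as ``feasible iff $\xi'\Delta\bb\geq0$ for all $\xi\in C_\beta$'' while you state the contrapositive.
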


\begin{proof}
The fact that $\Delta$ is invertible (because of Lemma~\ref{Kroneker})
and Theorem~\ref{Lasserre} imply that the system $Ax=b$ has a 
nonnegative integer solution $x\in\nn^n$ if and only if (\ref{eqn-21}) 
holds for some real nonnegative vector $\y\in\rr^p$. Farkas lemma 
implies that (\ref{eqn-21}) holds if and only if $\xi'\Delta\bb\geq0$ 
for every $\xi\in{C_\beta}$; however from (\ref{eqn-22}),
$$0\leq \xi'\Delta\bb\,=\!\sum_{\genfrac..{0pt}1{z\in\nn^m,}{z\neq0,\,z\leq\beta}}\xi_z\,b^z\,=\,
p_\xi(b),$$
the desired result.
\end{proof}

It is interesting to compare Theorem \ref{certificate} with
the standard Farkas lemma for (continuous) linear systems. The \textit{continuous} 
Farkas lemma provides a \textit{linear} certificate of the form 
$\omega'b<0$ for some $\omega\in\rr^m$ that satisfies $A'\omega\geq0$;
in contrast (the \textit{discrete}) Theorem~\ref{certificate} provides 
a \textit{non-linear} polynomial certificate $p_\xi(b)<0$.

\begin{example}
Let $A:=[3,4]\in\nn^2$, $\beta=5$, and $0\leq{b}\leq5$; i.e. the 
Frobenius equation $3x_1+4x_2=b$ with $x_1,x_2\in\nn$. Then $s=6$,
$$\Delta=\left[\begin{smallmatrix}
1&1&1&1&1&1\\ 
0&1&2&3&4&5\\ 
0&1&4&9&16&25\\ 
0&1&8&27&64&125\\ 
0&1&16&81&256&625\\ 
0&1&32&243&1024&3125
\end{smallmatrix}\right],
\quad\hbox{and}\quad
\Theta=\left[\begin{smallmatrix}
-1&0&0&-1&0\\ 
0&-1&0&0&-1\\ 
0&0&-1&0&0\\ 
1&0&0&0&0\\ 
0&1&0&1&0\\ 
0&0&1&0&1
\end{smallmatrix}\right].$$
The pair (\ref{eqn-21})-(\ref{eqn-22}) reads as follows~:
$$\begin{bmatrix}0&0&0&0&0\\ 
3&3&3&4&4\\ 
9&15&21&16&24\\
27&63&117&64&124\\ 
81&255&609&256&624\\ 
243&1023&3093&1024&3124
\end{bmatrix}\begin{bmatrix}
y[1,0]\\ y[1,1]\\ y[1,2]\\ y[2,0]\\ y[2,1]
\end{bmatrix}=\begin{bmatrix}
0\\ b\\ b^2\\ b^3\\ b^4\\ b^5
\end{bmatrix}.$$

Notice that $\Delta\Theta=\big[\genfrac..{0pt}10S\big]$ with 
$S\in\nn^{5\times{5}}$ an invertible matrix. Whence there is 
a nonnegative solution $\y\geq0$ if and only if
$$\y(b)\,=\,S^{-1}[b,b^2,b^3,b^4,b^5]'\,\geq\,0.$$
We can easily verify that $\y(5)=[0,-1,0,1,1]\not\geq0$. We also have that
$\y(b)\geq0$ for  $b=0,3,4$; and that $\y(b)\not\geq0$ for $b=1,2,5$. 
\end{example}

On the other hand, we can use the transpose $\Delta'$ instead of 
$\Delta$ in equations (\ref{eqn-21}) to (\ref{coneC}). Thus 
existence of a nonnegative integer solution $x\in\nn^n$ to 
$Ax=b$ is equivalent to existence of a nonnegative real 
solution $\y\in\rr^p$ to 
$$\Delta'\Theta\,\y\,=\,\Delta'\bb,\quad
\hbox{where}\quad(\Delta'\bb)[z]\,=\,(z^b-1).$$ 
Consider the polyhedral cone $C^*_\beta\subset\rr^s$ defined by 
$$C^*_\beta\,:=\,\{\,\xi\in\rr^s\,:\,\Theta'\Delta\,\xi\geq0\,\},$$
and with every $\xi=(\xi_z)\in{C}^*_\beta$,
associate the exponential-like function
\begin{equation}
\label{expolike}
\sum_{z\in\nn^m,\,z\leq\beta}\xi_z\cdot(z^u-1).\end{equation}
Observe that $f_\xi(b)=\xi'\Delta'\bb$. By Farkas lemma, one 
and only one of the following statements holds~:
\begin{description}
\item[(a)] The system $Ax=z$ has an integer solution $x\in\nn^n$.

\item[(b*)] $f_\xi(b)<0$ for some $\xi\in{C^*_\beta}$ and $f_\xi$ as in (\ref{expolike}).
\end{description}

{\small
}

\end{document}